\def\cal{\mathcal}
\newtheorem{theorem}{Theorem}[section]
\newtheorem{corollary}[theorem]{Corollary}
\newtheorem{lemma}[theorem]{Lemma}
\theoremstyle{definition}
\newtheorem{example}[theorem]{Example}
\theoremstyle{remark}
\title[Chaotic translations on weighted Orlicz spaces]
{Chaotic translations on weighted Orlicz spaces}
\author[C-C. Chen]{Chung-Chuan Chen}
\address{Department of Mathematics Education, National Taichung University of Education, Taichung 403, Taiwan}
\email{chungchuan@mail.ntcu.edu.tw}
\author[K-Y. Chen]{Kui-Yo Chen}
\address{Department of Mathematics, National Taiwan University, Taipei 106, Taiwan}
\email{kuiyochen1230@gmail.com}
\author[S. \"Oztop]{Serap \"Oztop}
\address{Department of Mathematics, Faculty of Science, Istanbul University, Istanbul, Turkey}
\email{oztops@istanbul.edu.tr}
\author[S. M. Tabatabaie]{Seyyed Mohammad Tabatabaie}
\address{Department of Mathematics, University of Qom, Qom, Iran}
\email{sm.tabatabaie@qom.ac.ir}
\subjclass[2010]{Primary 46E30; Secondary 54H20, 47A16}
\keywords{Chaos. Topological transitivity. Weighted Orlicz space. Locally compact group.}
\date{\today}
\begin{document}

\maketitle

\begin{abstract}
Let $G$ be a locally compact group, $w$ be a weight on $G$  and $\Phi$
be a Young function. We give some characterizations for
translation operators to be topologically transitive and chaotic on the weighted Orlicz space $L_w^\Phi(G)$.
In particular, transitivity is equivalent to the blow-up/collapse property in our case.
Moreover, the dense set of periodic elements implies transitivity automatically.
\end{abstract}

\baselineskip17pt

\section{introduction}

Linear chaos and hypercyclicity have been studied intensely during the last three decades.
We refer to these classic books \cite{bmbook,gpbook,kosticbook} on this subject.
Among them, examples of linear dynamics on $\ell^p(\Bbb{Z})$, $L^p(\Bbb{R})$ and $L^p(\Bbb{C})$ played
important roles in this investigation (see \cite{bb09,bmp03,cs04,ge00,ls02,sa95}).

In \cite{chen11,cc11},
we characterized transitive and chaotic weighted translations on the Lebesgue space of locally compact groups,
which subsumes some results on the discrete group $\Bbb{Z}$ in \cite{ge00,sa95}.
Since then, the study of linear dynamics on locally compact groups $G$ attracted a lot of attention.
Indeed, disjoint hypercyclicity of weighted translations on $L^p(G)$ was characterized by \cite{chen172,hl16,zhang17}.
Also, the existence of hypercyclic weighted translations on $L^p(G)$ was discussed in \cite{kuchen17}. For readers interested in the hypergroup case and vector-valued version, we refer to papers \cite{chta2, chta3}. Besides, the study of linear dynamics for weighted translations on the Orlicz space $L^\Phi(G)$
was initialed by \cite{aa17,cd18} where $\Phi$ is a Young function.

From another view, Abakumov and Kuznetsova in \cite{ak17} focused on the density of translates in the weighted Lebesgue space $L_w^p(G)$, and observed some different phenomenon from that in \cite{cc11}, where $w$ is a weight on $G$. Indeed, a translation cannot have a dense orbit on the unweighted space $L^p(G)$. However, there exist hypercyclic translations on the weighted space $L_w^p(G)$ in \cite{ak17}.
We note that the weighted Orlicz space $L^\Phi_w(G)$ was recently introduced in \cite{oo15}, where Osancliol and the third author
generalized the group algebras to the weighted Orlicz algebras of locally compact groups.
Inspired by \cite{ak17,oo15}, it is nature and significant to tackle linear dynamics on the weighted Orlicz spaces $L^\Phi_w(G)$. Hence, in this note, we will study transitivity and linear chaos for translations on $L^\Phi_w(G)$.

Let $X$ be a separable Banach space and $T$  an operator on $X$.
We recall that a bounded linear  operator $T$ on $X$ is said to be {\it topologically transitive} if for any pair of non-empty open sets $U,V$ in $X$,
 there exists an $n\in\Bbb{N}$ such that $T^n(U)\cap V\neq \emptyset$.
 Furthermore, if one can find an $m\in\Bbb{N}$ such that $T^n(U)\cap V\neq \emptyset$ for all $n\geq m$, then $T$
 is {\it topologically mixing}.
 It should be noted in \cite{gpbook} that topological transitivity and hypercyclicity are equivalent on separable Banach spaces.
The latter notion arises from the invariant subset problem in analysis. A bounded linear operator $T$ is called {\it hypercyclic} if there is a
$x\in X$ whose orbit under $T$, $Orb(T,x):=\{T^nx: n\in\Bbb{N}\}$ is dense in $X$, where $T^n$ is the n-fold iteration of $T$ with $T^0=I_X$.
Moreover, the operator $T$ is called a {\it chaotic} operator if it is hypercyclic (transitive) and it possesses  a dense set of
 periodic elements.

One of the useful criterions to verify hypercyclicity for $T$ is the blow-up/collapse property
which was stated by Godefroy and Shapiro in \cite{gs91}, and called like this by Grosse-Erdmann in \cite{ge03}.
An operator $T$ satisfies the {\it blow-up/collapse property} if for every pair $U,V$ of
 non-empty open subsets of $X$, and each open neighborhood  $W$ of zero in $X$, there exists a $n\in \Bbb N$ such that
  both $T^n(U)\cap W\neq\emptyset$ and $T^n(W)\cap V\neq\emptyset$. If $T$ satisfies the blow up/collapse property, then it is hypercyclic in \cite{gs91}.

In the following, we introduce the weighted Orlicz space briefly for the further study.
A continuous, even and convex function $\Phi:{\Bbb R}\rightarrow {\Bbb R}$ is called a {\it Young function} if it satisfies $\Phi(0)=0$, $\Phi(t)>0$ for $t>0$, and $\lim_{t\rightarrow\infty}\Phi(t)=\infty$.
For a Young function $\Phi$, the complementary function $\Psi$ of $\Phi$ is given by
$$\Psi(y)=\sup\{x|y|-\Phi(x):x\geq 0\}\qquad(y\in\Bbb{R}),$$
which is also a Young function. If $\Psi$ is the complementary function of $\Phi$, then
$\Phi$ is the complementary function $\Psi$, and they satisfy the Young inequality
$$xy\leq \Phi(x)+\Psi(y)\qquad(x,y\geq0).$$

Let $G$ be a locally compact group with identity $e$ and a right Haar measure $\lambda$. Then the {\it Orlicz space} $L^\Phi(G)$ is defined by
$$L^\Phi(G)=\left\{f:G\rightarrow \Bbb{C}: \int_G\Phi(\alpha|f|)d\lambda<\infty\ \text{for some $\alpha>0$} \right\}$$
where $f$ is a Borel measurable function. Moreover, the Orlicz space is a Banach space under the Orlicz norm
defined for $f\in L^\Phi(G)$ by
$$\|f\|_{\Phi}=\sup\left\{\int_G|fv|d\lambda: \int_G\Psi(|v|)d\lambda\leq1\right\}.$$
One can also define the Luxemburg norm on $L^\Phi(G)$ by
$$N_\Phi(f)=\inf\left\{k>0:\int_G\Phi\left(\frac{|f|}{k}\right)d\lambda\leq1\right\}.$$
It is well known that these two norms are equivalent.

The Orlicz spaces are  generalization of the usual Lebesgue spaces.
The important properties of Orlicz spaces have been investigated intensely over the last several decades.
For example, Piaggio studied Orlicz spaces and the large scale geometry of Heintze groups in \cite{pi17}. Also,
the properties $(T_{L^\Phi})$ and $(F_{L^\Phi})$ for Orlicz spaces $L^\Phi$ were attained by Tanaka in \cite{ta17} recently.
For more discussions and recent works on Orlicz spaces, see \cite{cl17,ha15,rr91}.

We note that a Banach space admits a hypercyclic operator if and only if it is separable and infinite-dimensional \cite{an97,bg99}.
Hence, in this paper we assume that $G$ is second countable and $\Phi$ is $\Delta_2$-regular.
A Young function is said to be $\Delta_2$-{\it regular}  if there exist a constant $M>0$ and $t_0>0$ such that $\Phi(2t)\leq M\Phi(t)$ for $t\geq t_0$ when $G$ is compact, and $\Phi(2t)\leq M\Phi(t)$ for all $t>0$ when $G$ is non-compact in \cite{rr91}.
For example, both Young functions $\Phi$ given by
$$\Phi(t)=\frac{|t|^p}{p}\quad(1\leq p<\infty),\qquad \mbox{and}\qquad\Phi(t)=|t|^\alpha(1+|\log|t||)\quad (\alpha>1)$$
are  $\Delta_2$-regular \cite{rr91}. If $\Phi$ is $\Delta_2$-regular, then the space $C_c(G)$ of all continuous functions on $G$ with compact support is dense in $L^\Phi(G)$, and the dual space $(L^\Phi(G),\|\cdot\|_\Phi)$ is $(L^\Psi(G),N_\Psi(\cdot))$, where $\Psi$ is the complementary function of $\Phi$.

A continuous function $w:G \rightarrow (0,\infty)$ is called a {\it weight} on $G$ if
$$w(xy)\leq w(x)w(y)\qquad (x,y\in G).$$
As in \cite{oo15}, one can define the weighted Orlicz space by
$$L^\Phi_w(G):=\{f:fw\in L^\Phi(G)\}$$
endowed with the norm
$$\|f\|_{\Phi,w}:=\|fw\|_\Phi\qquad (f\in L^\Phi_w(G)),$$
which is called a {\it weighted Orlicz norm}. Then $L^\Phi_w(G)$ is a Banach space
with respect to the norm $\|\cdot\|_{\Phi,w}$. Moreover, it was showed in \cite[Lemma 2.1]{oo15}
that the space $C_c(G)$ is dense in $L^\Phi_w(G)$ if $\Phi$ is $\Delta_2$-regular.

Based on the preliminaries about the weighted Orlicz space, we next define the translation operator on $L^\Phi_w(G)$.
Let $a \in G$ and $\delta_a$ be the unit point mass at $a$. A {\it translation operator} on $G$ is
a convolution operator $T_{a}: L^\Phi_w(G)\longrightarrow L^\Phi_w(G)$ defined by
$$(T_af)(x)=(f*\delta_a)(x)= \int_{y\in G} f(xy^{-1})\delta_a(y) = f(xa^{-1}) \qquad (x\in G, f \in L^\Phi_w(G)).$$
We can also define a self-map $S_a$ on $L^\Phi_w(G)$  by
$$S_a(h) =h*\delta_{a^{-1}} \qquad (h \in L^\Phi_w(G))$$ so that
$$T_aS_a(h)=S_aT_a(h)=h \qquad (h \in L^\Phi_w(G)).$$

Since $T_a$ is generated by $a$, some elements of $G$ should be excluded from our consideration.
Indeed, it is easy to see that $T_a$ can not be hypercyclic if $a$ is a torsion element.
An element $a$ in a group $G$ is called a {\it torsion element} if
it is of finite order. In a locally compact group $G$, an element $a\in G$ is called {\it
periodic} (or {\it compact}) in \cite{rossbook} if the closed subgroup $G(a)$
generated by $a$ is compact. We call an element in
$G$ {\it aperiodic} if it is not periodic. For discrete groups,
periodic and torsion elements are identical.

\begin{lemma}
Let $G$ be a locally compact group, and  $a\in G$ be a torsion element. Let $w$ be a weight on $G$ and
$\Phi$ be a Young function. Then any translation $T_{a}:L_w^\Phi(G)\rightarrow L_w^\Phi(G)$ is not hypercyclic.
\end{lemma}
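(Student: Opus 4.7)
The plan is to exploit the fact that a torsion element produces an iterate which is the identity, so every orbit under $T_a$ is finite, and no finite subset can be dense in a nontrivial Banach space. This bypasses any subtle issue involving the weight $w$ or the Young function $\Phi$.

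First, I would verify the composition rule for translation operators: from the definition $(T_a f)(x)=f(xa^{-1})$ one gets $(T_a T_b f)(x)=(T_b f)(xa^{-1})=f(xa^{-1}b^{-1})=f(x(ba)^{-1})=(T_{ba}f)(x)$, so $T_a T_b = T_{ba}$. In particular $T_a^n = T_{a^n}$ for every $n\in\mathbb{N}$, and $T_e = I_{L_w^\Phi(G)}$.

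Next, since $a$ is a torsion element there exists some $n\in\mathbb{N}$ with $a^n=e$, whence $T_a^n = T_e = I$. Therefore for any $f\in L_w^\Phi(G)$, the orbit
\[
\operatorname{Orb}(T_a,f)=\{T_a^k f : k\in\mathbb{N}\}=\{f, T_a f, T_a^2 f,\dots, T_a^{n-1}f\}
\]
contains at most $n$ elements and is in particular finite.

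Finally, I would observe that $L_w^\Phi(G)$ is a Banach space over $\mathbb{C}$, hence either trivial or of uncountable cardinality; in either case it admits no dense finite subset (the trivial space admits no hypercyclic operator either, since hypercyclicity is only defined on separable Banach spaces in the sense used after the excerpt, and standardly requires infinite dimension). Consequently no orbit of $T_a$ can be dense, so $T_a$ is not hypercyclic. I do not anticipate any real obstacle; the only thing to be careful about is to confirm the composition identity $T_a^n=T_{a^n}$ from the convolution definition rather than guessing the order of multiplication.
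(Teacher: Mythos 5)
Your proposal is correct and follows essentially the same route as the paper: use $a^d=e$ to conclude that $T_a^d=I$, so every orbit is finite and hence not dense. The paper states this directly via $T_a^n f = f\ast\delta_{a^n}$, while you additionally spell out the composition identity $T_aT_b=T_{ba}$ and the reason a finite set cannot be dense; these are harmless elaborations of the same argument.
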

\begin{proof}
Let $a$ have order $d$, that is, $a^d=e$. Then for each $f\in L_w^\Phi(G)$, the orbit of $f$ is given by
$$\{f, f\ast\delta_a, f\ast\delta_{a^2}, \cdot\cdot\cdot, f\ast\delta_{a^{d-1}}, f\ast\delta_{a^{d}}, f\ast\delta_{a^{d+1}},\cdot\cdot\cdot\}=\{f, f\ast\delta_a, f\ast\delta_{a^2}, \cdot\cdot\cdot, f\ast\delta_{a^{d-1}}\}$$
which is a set of finite vectors, and cannot be dense in $L_w^\Phi(G)$.
Therefore $T_a$ is not hypercyclic if $a$ is torsion.


\end{proof}

In what follows, we only consider the translation $T_a$ when $a$ is aperiodic, and make use of this property of aperiodicity to obtain our results.
It was showed in \cite{cc11}, an element $a\in G$ is aperiodic if, and only if, for any compact set $K\subset G$, there exists some
$M\in\Bbb N$ such that $K\cap Ka^{\pm n}=\emptyset$ for all $n>M$.
We note that \cite{cc11} in many familiar non-discrete groups, including the additive group $\Bbb R^d$, the Heisenberg group and the affine
group, all elements except the identity are aperiodic.

\section{Chaotic conditions}

In this section, we will provide and prove the results. First, we give the characterization
for the translation operator $T_a$ on the weighted Orlicz space $L^\Phi_w(G)$ to be topologically transitive.
In particular, topological transitivity and the blow up/collapse property are equivalent in our case.

\begin{theorem}\label{transitive}
Let $G$ be a locally compact group and $a\in G$ be an aperiodic element. Let $w$ be a weight on $G$ and
$\Phi$ be a Young function. Let $T_{a}$ be a translation on $L_w^\Phi(G)$. Then the following conditions are equivalent.
\begin{enumerate}
\item[{\rm(i)}] $T_{a}$ is topologically transitive on $L_w^\Phi(G)$.
\item[{\rm(ii)}] $T_{a}$ satisfies the blow up/collapse property.
\item[{\rm(iii)}] For each compact subset $K \subseteq G$ with $\lambda(K)>0$, there exist a
sequence of Borel sets $(E_{k})$ in $K$ and a strictly increasing sequence $(n_k)\subset\Bbb{N}$ such that
$$\lim_{k \rightarrow \infty}\sup_{v\in \Omega}\int_{K\setminus{E_k}}|v(x)|w(x)d\lambda(x)=0$$
and
$$\lim_{k \rightarrow \infty}\sup_{v\in \Omega}\int_{E_k}|v(xa^{\pm n_k})|w(xa^{\pm n_k})d\lambda(x)=0$$
where $\Omega$ is the set of all Borel functions $v$ on $G$ satisfying $\int_G \Psi(|v|)d\lambda\leq 1$.
\end{enumerate}
\end{theorem}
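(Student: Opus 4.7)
The plan is to run the cycle (iii) $\Rightarrow$ (ii) $\Rightarrow$ (i) $\Rightarrow$ (iii); the middle implication is immediate from the Godefroy--Shapiro fact recalled in the introduction. The whole argument rests on a dictionary between the integrals in (iii) and weighted Orlicz norms of indicator functions: by Orlicz duality $\|\chi_A\|_{\Phi,w} = \sup_{v \in \Omega} \int_A w|v|\,d\lambda$ for any Borel set $A \subseteq G$, and the measure-preserving change of variable $y = xa^{\pm n_k}$ (right-invariance of $\lambda$) yields
$$\sup_{v \in \Omega}\int_{E_k} |v(xa^{\pm n_k})|w(xa^{\pm n_k})\,d\lambda(x) = \|\chi_{E_k a^{\pm n_k}}\|_{\Phi,w}.$$
Thus (iii) reads as the two norm-smallness statements $\|\chi_{K \setminus E_k}\|_{\Phi,w} \to 0$ and $\|\chi_{E_k a^{\pm n_k}}\|_{\Phi,w} \to 0$.

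For (iii) $\Rightarrow$ (ii), given non-empty open $U, V$ and a neighborhood $W$ of $0$, the density of $C_c(G)$ in $L_w^\Phi(G)$ (recalled in the introduction) supplies $u \in U \cap C_c(G)$ and $v \in V \cap C_c(G)$ supported in a common compact $K$ of positive measure. Applying (iii) to this $K$, I would set $f_k := u\,\chi_{E_k}$ and $g_k := T_a^{-n_k}(v\,\chi_{E_k})$. Using the boundedness of $u, v$ and the fact that $T_a^{m}$ applied to a function supported in $E_k$ is supported in $E_k a^{m}$, the four estimates
$$\|u - f_k\|_{\Phi,w} \le \|u\|_\infty\|\chi_{K\setminus E_k}\|_{\Phi,w}, \qquad \|T_a^{n_k} f_k\|_{\Phi,w} \le \|u\|_\infty \|\chi_{E_k a^{n_k}}\|_{\Phi,w},$$
$$\|g_k\|_{\Phi,w} \le \|v\|_\infty \|\chi_{E_k a^{-n_k}}\|_{\Phi,w}, \qquad \|T_a^{n_k} g_k - v\|_{\Phi,w} \le \|v\|_\infty \|\chi_{K\setminus E_k}\|_{\Phi,w}$$
all tend to $0$. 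Hence for $k$ large, $f_k \in U$ with $T_a^{n_k} f_k \in W$ and $g_k \in W$ with $T_a^{n_k} g_k \in V$, witnessing the blow-up/collapse property at the single time $n_k$. Both signs of (iii) are essential here.

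For (i) $\Rightarrow$ (iii), I fix a compact $K$ with $\lambda(K)>0$, so that $\chi_K \in L_w^\Phi(G)$ by continuity of $w$. For the $+n_k$ direction, transitivity applied to the balls $U = B(\chi_K, 1/k)$, $V = B(0, 1/k)$ yields $f_k$ with $\|f_k - \chi_K\|_{\Phi,w} < 1/k$ and some $n_k \in \Bbb{N}$ with $\|T_a^{n_k} f_k\|_{\Phi,w} < 1/k$; the aperiodicity of $a$ (via $K \cap Ka^{\pm n}=\emptyset$ for $n$ large, recalled in the introduction) lets me take $n_k$ strictly increasing. I define the superlevel set $E_k := \{x \in K : |f_k(x)| \ge 1/2\}$. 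On $K \setminus E_k$ we have $|f_k - 1| > 1/2$, so $\chi_{K \setminus E_k} \le 2|f_k - \chi_K|$ pointwise and $\|\chi_{K \setminus E_k}\|_{\Phi,w} \le 2/k$; on $E_k$, translating the inequality $\chi_{E_k} \le 2|f_k|$ through $T_a^{n_k}$ gives $\chi_{E_k a^{n_k}} \le 2|T_a^{n_k} f_k|$ pointwise and hence $\|\chi_{E_k a^{n_k}}\|_{\Phi,w} \le 2/k$.

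The main obstacle is arranging that the same pair $(E_k, n_k)$ also serves the $-n_k$ direction. A parallel application of transitivity with the roles of the two balls swapped yields $h_k$ with $\|h_k\|_{\Phi,w} < 1/k$ and $m_k \in \Bbb{N}$ with $\|T_a^{m_k} h_k - \chi_K\|_{\Phi,w} < 1/k$, so $\widetilde{f}_k := T_a^{m_k} h_k$ approximates $\chi_K$ while $T_a^{-m_k}\widetilde{f}_k = h_k$ is small; an analogous level-set argument produces the $-m_k$ half of (iii) with an auxiliary set $\widetilde{E}_k$. Merging the two indices is the technical heart of the proof: the time sets $\{n : T_a^n B(\chi_K, 1/k) \cap B(0, 1/k) \neq \emptyset\}$ and $\{m : T_a^m B(0, 1/k) \cap B(\chi_K, 1/k) \neq \emptyset\}$ are both infinite by Baire transitivity, and a matching argument exploiting aperiodicity (so that $K, Ka^n, Ka^{-n}$ are pairwise disjoint for large $n$) yields a common $n_k$ in both sets, after which $E_k \cap \widetilde{E}_k$ serves as the desired common Borel set.
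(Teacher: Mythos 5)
Your dictionary between condition (iii) and the norms $\|\chi_{K\setminus E_k}\|_{\Phi,w}$, $\|\chi_{E_k a^{\pm n_k}}\|_{\Phi,w}$ is correct, and your proof of (iii) $\Rightarrow$ (ii) is essentially the paper's argument. The gap is in (i) $\Rightarrow$ (iii). You extract the $+n_k$ estimate from transitivity applied to the pair $\bigl(B(\chi_K,1/k),B(0,1/k)\bigr)$ and the $-$ estimate from the swapped pair, and then assert that ``a matching argument exploiting aperiodicity'' produces a common time lying in both return-time sets. That is exactly the point that needs proof, and aperiodicity cannot supply it: the disjointness of $K$, $Ka^{n}$, $Ka^{-n}$ says nothing about which $n$ belong to the sets $N(U,W)$ and $N(W,V)$, which are determined by the dynamics of $T_a$ on $L^\Phi_w(G)$ (in particular by $w$). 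The existence of a common $n$ for the pairs $(U,W)$ and $(W,V)$ is precisely an instance of the blow-up/collapse property, i.e.\ of the implication (i) $\Rightarrow$ (ii) that the theorem is partly asserting; for a general transitive system these two return-time sets need not intersect, so invoking it here without proof makes the argument circular at what you yourself call its technical heart.

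The paper sidesteps this by a different choice of open sets: apply transitivity once to $U=V=B(\chi_K,\varepsilon^2)$, obtaining a single $f$ and a single $m$ with $K\cap Ka^{\pm m}=\emptyset$, $\|f-\chi_K\|_{\Phi,w}<\varepsilon^2$ and $\|T_a^m f-\chi_K\|_{\Phi,w}<\varepsilon^2$. Removing from $K$ the sets where $|f-1|\ge\varepsilon$ or $|T_a^m f-1|\ge\varepsilon$ (each of which has small $\sup_{v\in\Omega}\int |v|w\,d\lambda$ by the same level-set estimate you use) leaves one Borel set $E_m\subseteq K$ on which both $|f|>1-\varepsilon$ and $|T_a^m f|>1-\varepsilon$. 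Both signs then come out of the two disjointness relations: on $E_m a^{m}$, which is disjoint from $K$, the function $T_a^m f$ coincides with the translate of $f|_{E_m}$ and so has modulus exceeding $1-\varepsilon$, whence $(1-\varepsilon)\|\chi_{E_m a^{m}}\|_{\Phi,w}\le\|T_a^m f-\chi_K\|_{\Phi,w}<\varepsilon^2$; symmetrically, on $E_m a^{-m}$ the function $f$ coincides with the translate of $(T_a^m f)|_{E_m}$, whence $(1-\varepsilon)\|\chi_{E_m a^{-m}}\|_{\Phi,w}\le\|f-\chi_K\|_{\Phi,w}<\varepsilon^2$. If you rerun your own level-set computation with $V=B(\chi_K,1/k)$ in place of $B(0,1/k)$ and cut $E_k$ out by both level conditions, your proof closes with no merging step at all.
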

\begin{proof}
(iii) $\Rightarrow$ (ii)
Suppose that $U$ and $V$  are nonempty open subsets of $L^\Phi_w(G)$, and $W$ is an open neighborhood of 0 in $L^\Phi_w(G)$.
Since the space $C_c(G)$ of all continuous functions on $G$ with compact support is dense in $L^\Phi_w(G)$, there are $f,g\in C_c(G)$
such that $f\in U$ and $g\in V$. We pick $\varepsilon>0$ such that the balls
$$B(f,\varepsilon):=\{h\in L_w^\Phi(G):\|h-f\|_{\Phi,w}<\varepsilon\}\subseteq U,$$
$B(g,\epsilon)\subseteq V$ and $B(0,\varepsilon)\subseteq W$. Let $K$ be the union of supports of $f$ and $g$. Assume that $(E_k)$ and $(n_k)$
satisfy the hypothesis with respect to the compact set $K\subseteq G$. Then there is $k>0$ such that
$$\|f\|_{\infty}\sup_{v\in \Omega}\int_{K\setminus{E_k}}|v(x)|w(x)d\lambda(x)<\varepsilon,$$
$$\|f\|_{\infty}\sup_{v\in \Omega}\int_{E_k}|v(xa^{n_k})|w(xa^{n_k})d\lambda(x)<\varepsilon$$
and
$$\|f\|_{\infty}\sup_{v\in \Omega}\int_{E_k}|v(xa^{-n_k})|w(xa^{-n_k})d\lambda(x)<\varepsilon.$$
Therefore
\begin{eqnarray*}
\|T_{a}^{n_k}(f\chi_{E_k})\|_{\Phi,w}&=&\sup_{v\in\Omega}\int_G|T_a^{n_k}(f\chi_{E_k})(x)v(x)|w(x)d\lambda(x)\\
&=&\sup_{v\in\Omega}\int_G|f(xa^{-n_k})\chi_{E_k}(xa^{-n_k})v(x)|w(x)d\lambda(x)\\
&=&\sup_{v\in\Omega}\int_G|f(x)\chi_{E_k}(x)v(xa^{n_k})|w(xa^{n_k})d\lambda(x)\\
&\leq& \|f\|_{\infty} \sup_{v\in\Omega}\int_{E_k}|v(xa^{n_k})|w(xa^{n_k})d\lambda(x)<\varepsilon.
\end{eqnarray*}
Hence $T_{a}^{n_k}(f\chi_{E_k})\in W$. Also,
\begin{eqnarray*}
\|f-f\chi_{E_k}\|_{\Phi,w}&=&\sup_{v\in\Omega}\int_G|f(x)-f(x)\chi_{E_k}(x)||v(x)|w(x)d\lambda(x)\\
&=&\sup_{v\in\Omega}\int_G|f(x)\chi_{K\setminus E_k}(x)||v(x)|w(x)d\lambda(x)\\
&=&\sup_{v\in\Omega}\int_{K\setminus E_k}|f(x)||v(x)|w(x)d\lambda(x)\\
&\leq& \|f\|_{\infty}\int_{K\setminus E_k}|v(x)|w(x)d\lambda(x)<\varepsilon,
\end{eqnarray*}
which says $f\chi_{E_k}\in U$. Hence, $T_{a}^{n_k}(f\chi_{E_k})\in T_{a}^{n_k}(U)\cap W$.
By applying the similar arguments for $S_{a}$ and $g$, one can obtain that $S_{a}^{n_k}(g\chi_{E_k})\in  S_{a}^{n_k}(V)\cap W$, that is,
$g\chi_{E_k}\in  V\cap T_{a}^{n_k}(W)$. Combing all these, $T_{a}$ satisfies the blow up/collapse property.

(i) $\Rightarrow$ (iii).
By the assumptions of topological transitivity and aperiodicity of $a$,
there exist $f\in L_w^\Phi(G)$ and some $m\in \Bbb{N}$ such that
$K\cap Ka^{\pm m}=\emptyset$,
$$\|f-\chi_{K}\|_{\Phi,w} < \varepsilon^2 \quad \mbox{and} \quad \|T_{a}^{m}f-\chi_{K}\|_{\Phi,w}< \varepsilon^2.$$
Let $$A=\{x\in K:|f(x)-1|\geq\varepsilon\}.$$
Then
$$|f(x)|>1-\varepsilon \qquad (x\in K\setminus A)\qquad \mbox{and}\qquad \sup_{v\in \Omega}\int_A|v(x)|w(x)d\lambda(x)<\varepsilon$$
by
\begin{eqnarray*}
\varepsilon^2 &>& \|f-\chi_{K}\|_{\Phi,w}\\
&=& \sup_{v\in \Omega}\int_G|f(x)-\chi_{K}(x)||v(x)|w(x)d\lambda(x)\\
&\geq& \sup_{v\in \Omega}\int_A|f(x)-1||v(x)|w(x)d\lambda(x)\\
&>& \sup_{v\in \Omega}\int_A \varepsilon |v(x)|w(x)d\lambda(x).
\end{eqnarray*}
Let $$B_m=\{x\in K:|T_{a}^{m}f(x)-1|\geq\varepsilon\}.$$
Then
$$|T_{a}^{m}f(x)|>1-\varepsilon \qquad ( x\in K\setminus B_{m})\qquad \mbox{and}\qquad \sup_{v\in \Omega}\int_{B_m}|v(x)|w(x)d\lambda(x)<\varepsilon$$
by the following estimate
\begin{eqnarray*}
\varepsilon^2 &>& \|T_{a}^{m}f-\chi_{K}\|_{\Phi,w}\\
&=& \sup_{v\in \Omega}\int_G|(T_{a}^{m}f)(x)-\chi_{K}(x)||v(x)|w(x)d\lambda(x)\\
&\geq& \sup_{v\in \Omega}\int_{B_m}|(T_{a}^{m}f)(x)-1||v(x)|w(x)d\lambda(x)\\
&>& \sup_{v\in \Omega}\int_{B_m}\varepsilon |v(x)|w(x)d\lambda(x).
\end{eqnarray*}

Let $E_m=K\setminus (A\cup B_m)$. Then by $K\cap Ka^{\pm m}=\emptyset$, we have
\begin{eqnarray*}
\varepsilon^2 &>& \|T_{a}^{m}f-\chi_{K}\|_{\Phi,w}\\
&=& \sup_{v\in \Omega}\int_G|(T_{a}^{m}f)(x)-\chi_{K}(x)||v(x)|w(x)d\lambda(x)\\
&=& \sup_{v\in \Omega}\int_G|f(xa^{-m})-\chi_{K}(x)||v(x)|w(x)d\lambda(x)\\
&=& \sup_{v\in \Omega}\int_G|f(x)-\chi_{K}(xa^m)||v(xa^m)|w(xa^m)d\lambda(x)\\
&\geq& \sup_{v\in \Omega}\int_{E_m}|f(x)-\chi_{K}(xa^m)||v(xa^m)|w(xa^m)d\lambda(x)\\
&=& \sup_{v\in \Omega}\int_{E_m}|f(x)||v(xa^m)|w(xa^m)d\lambda(x)\\
&>& \sup_{v\in \Omega}\int_{E_m}(1-\varepsilon)|v(xa^m)|w(xa^m)d\lambda(x)
\end{eqnarray*}
Hence,
$$\sup_{v\in \Omega}\int_{E_m}|v(xa^m)|w(xa^m)d\lambda(x)<\frac{\varepsilon^2}{1-\varepsilon}.$$
Similarly,
$$\sup_{v\in \Omega}\int_{E_m}|v(xa^{-m})|w(xa^{-m})d\lambda(x)<\frac{\varepsilon^2}{1-\varepsilon}$$
by
\begin{eqnarray*}
\varepsilon^2 &>& \|f-\chi_{K}\|_{\Phi,w}\\
&=& \sup_{v\in \Omega}\int_G|S_a^m(T_{a}^{m}f)(x)-\chi_{K}(x)||v(x)|w(x)d\lambda(x)\\
&=& \sup_{v\in \Omega}\int_G|(T_{a}^{m}f)(xa^{m})-\chi_{K}(x)||v(x)|w(x)d\lambda(x)\\
&=& \sup_{v\in \Omega}\int_G|(T_{a}^{m}f)(x)-\chi_{K}(xa^{-m})||v(xa^{-m})|w(xa^{-m})d\lambda(x)\\
&\geq& \sup_{v\in \Omega}\int_{E_m}|(T_{a}^{m}f)(x)-\chi_{K}(xa^{-m})||v(xa^{-m})|w(xa^{-m})d\lambda(x)\\
&=& \sup_{v\in \Omega}\int_{E_m}|(T_{a}^{m}f)(x)||v(xa^{-m})|w(xa^{-m})d\lambda(x)\\
&>& \sup_{v\in \Omega}\int_{E_m}(1-\varepsilon)|v(xa^{-m})|w(xa^{-m})d\lambda(x).
\end{eqnarray*}

Also, we have
\begin{eqnarray*}
&& \sup_{v\in \Omega}\int_{K\setminus{E_m}}|v(x)|w(x)d\lambda(x)\\
&=& \sup_{v\in \Omega}\int_{A\cup B_m}|v(x)|w(x)d\lambda(x)\\
&\leq& \sup_{v\in \Omega}\int_{A}|v(x)|w(x)d\lambda(x)
+ \sup_{v\in \Omega}\int_{B_m}|v(x)|w(x)d\lambda(x)\\
&<&\varepsilon+\varepsilon=2\varepsilon.
\end{eqnarray*}

Combining all these, condition (iii) follows.
\end{proof}

\begin{example}
Let $G=\Bbb{R}$ and $a=5$. Let $w$ be a weight on $\Bbb{R}$.
Then the translation $T_{5}$ on $L^\Phi_w(\Bbb{R})$ is defined by
$$T_{5}f(x)=f(x-5)\qquad(f\in L^\Phi_w(\Bbb{R})).$$
By Theorem \ref{transitive}, $T_{5}$ is topologically transitive if given a compact subset $K \subseteq \Bbb{R}$, there exist a
sequence of Borel sets $(E_{k})$ in $K$ and a strictly increasing sequence $(n_k)\subset\Bbb{N}$ such that
$$\lim_{k \rightarrow \infty}\sup_{v\in \Omega}\int_{K\setminus{E_k}}|v(x)|w(x)dx=0$$
and
$$\lim_{k \rightarrow \infty}\sup_{v\in \Omega}\int_{E_k}|v(x\pm 5n_k)|w(x\pm 5n_k)dx=0$$
where $\Omega$ is the set of all Borel functions $v$ on $\Bbb{R}$ satisfying $\int_{\Bbb{R}}\Psi(|v(x)|)dx\leq 1$.
\end{example}

By strengthening the condition (iii) of Theorem \ref{transitive}, one can give
a sufficient and necessary condition for $T_a$ to be topologically mixing.

\begin{corollary}
Let $G$ be a locally compact group and  $a\in G$ be an aperiodic element. Let $w$ be a weight on $G$ and
$\Phi$ be a Young function. Let $T_{a}$ be a translation on $L_w^\Phi(G)$. Then the following conditions are equivalent.
\begin{enumerate}
\item[{\rm(i)}] $T_{a}$ is topologically mixing on $L_w^\Phi(G)$.
\item[{\rm(ii)}] For each compact subset $K \subseteq G$ with $\lambda(K)>0$, there exists a
sequence of Borel sets $(E_{n})$ in $K$ such that
$$\lim_{n \rightarrow \infty}\sup_{v\in \Omega}\int_{K\setminus{E_n}}|v(x)|w(x)d\lambda(x)=0$$
and
$$\lim_{n \rightarrow \infty}\sup_{v\in \Omega}\int_{E_n}|v(xa^{\pm n})|w(xa^{\pm n})d\lambda(x)=0$$
where $\Omega$ is the set of all Borel functions $v$ on $G$ satisfying $\int_G \Psi(|v|)d\lambda\leq 1$.
\end{enumerate}
\end{corollary}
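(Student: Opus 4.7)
The plan is to adapt the proof of Theorem \ref{transitive} with minor modifications reflecting that mixing upgrades transitivity from ``for some $n$'' to ``for all sufficiently large $n$''. In both directions, the sets $E_n$ will be indexed by the natural number $n$ itself rather than by a subsequence $n_k$, and essentially the same Orlicz-norm estimates used in the theorem will carry over.

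For (ii) $\Rightarrow$ (i), I would imitate the (iii) $\Rightarrow$ (ii) argument of Theorem \ref{transitive}, but combine the blow-up and collapse pieces into a single approximant so that mixing itself (not just blow-up/collapse) is produced. Given nonempty open sets $U, V$, pick $f \in U \cap C_c(G)$ and $g \in V \cap C_c(G)$, choose $\varepsilon > 0$ with $B(f, \varepsilon) \subseteq U$ and $B(g, \varepsilon) \subseteq V$, and let $K$ be the union of the supports of $f$ and $g$. Apply (ii) to produce a sequence $(E_n)$ in $K$. For each $n$ set
\[
h_n := f \chi_{E_n} + S_a^n(g \chi_{E_n}).
\]
Writing $h_n - f = -f\chi_{K \setminus E_n} + S_a^n(g \chi_{E_n})$ and $T_a^n h_n - g = T_a^n(f\chi_{E_n}) - g\chi_{K \setminus E_n}$, the change-of-variable trick used in the theorem bounds $\|h_n - f\|_{\Phi, w}$ and $\|T_a^n h_n - g\|_{\Phi, w}$ by $(\|f\|_\infty + \|g\|_\infty)$ times the two suprema in (ii). Both tend to $0$, so eventually $h_n \in U$ and $T_a^n h_n \in V$, which is precisely mixing.

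For (i) $\Rightarrow$ (ii), I would mirror the (i) $\Rightarrow$ (iii) argument but choose the sets by a diagonal on the error threshold. Fix a compact $K$ with $\lambda(K)>0$ and, using aperiodicity, pick $N_0$ with $K \cap Ka^{\pm n} = \emptyset$ for $n > N_0$. For each $k \in \Bbb{N}$, mixing applied to the balls $B(\chi_{K}, 1/k^2)$ delivers an index $N_k > \max(N_{k-1}, N_0)$ such that for every $n \geq N_k$ there is $f_n \in L^\Phi_w(G)$ with $\|f_n - \chi_K\|_{\Phi, w} < 1/k^2$ and $\|T_a^n f_n - \chi_K\|_{\Phi, w} < 1/k^2$. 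For $n \in [N_k, N_{k+1})$ I would set
\[
A_n = \{x \in K : |f_n(x) - 1| \geq 1/k\}, \quad B_n = \{x \in K : |T_a^n f_n(x) - 1| \geq 1/k\}, \quad E_n = K \setminus (A_n \cup B_n),
\]
and rerun the four integral estimates from the theorem. They bound the two suprema of (ii) by $2/k$ and $(1/k^2)/(1-1/k)$, both tending to $0$ as $n \to \infty$.

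The main obstacle is organizing the diagonal so that the convergences in (ii) hold along the \emph{full} sequence rather than just a subsequence. Once the stabilization indices $N_k$ are picked strictly increasing and the error parameter is matched block-wise to the corresponding $f_n$, the theorem's integral manipulations apply essentially verbatim, so the real content of the corollary is this bookkeeping upgrade together with the observation that aperiodicity provides the disjointness $K \cap Ka^{\pm n} = \emptyset$ uniformly for all large $n$, not merely along a chosen subsequence.
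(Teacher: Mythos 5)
Your proposal is correct and follows the same route as the paper, whose proof of this corollary is literally the one-line remark that one repeats the argument of Theorem \ref{transitive} with the full sequence $(n)$ in place of $(n_k)$. The two details you supply beyond that remark --- combining the blow-up and collapse pieces into the single approximant $h_n = f\chi_{E_n} + S_a^n(g\chi_{E_n})$ so as to get mixing directly (blow-up/collapse alone would not suffice), and the block-wise choice of thresholds $1/k$ on $[N_k, N_{k+1})$ so that the convergence in (ii) holds along the full sequence --- are exactly the right bookkeeping and both check out against the estimates in the theorem's proof.
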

\begin{proof}
The proof is similar to that of Theorem \ref{transitive} by using the full sequence $(n)$ instead of subsequence $(n_k)$.
\end{proof}

Based on Theorem \ref{transitive}, we end the paper by characterizing chaotic translations on the weighted Orlicz space $L_w^\Phi(G)$, and showing that
the dense set of periodic elements implies transitivity automatically.

\begin{theorem}\label{chaos}
Let $G$ be a locally compact group and $a\in G$ be an aperiodic element. Let $w$ be a weight on $G$ and
$\Phi$ be a Young function. Let $T_{a}$ be a translation on $L_w^\Phi(G)$ and ${\cal P}(T_{a})$ be the set of periodic elements of $T_a$.
Then the following conditions are equivalent.
\begin{enumerate}
\item[{\rm(i)}] $T_{a}$ is chaotic on $L_w^\Phi(G)$.
\item[{\rm(ii)}] ${\cal P}(T_{a})$ is dense in $L_w^\Phi(G)$.
\item[{\rm(iii)}] For each compact subset $K \subseteq G$ with $\lambda(K)>0$, there exist a
sequence of Borel sets $(E_{k})$ in $K$ and a strictly increasing sequence $(n_k)\subset\Bbb{N}$ such that
$$\lim_{k \rightarrow \infty}\sup_{v\in \Omega}\int_{K\setminus{E_k}}|v(x)|w(x)d\lambda(x)=0$$
and
$$\lim_{k \rightarrow \infty}\sup_{v\in \Omega}\left(\sum_{l=1}^{\infty}\int_{E_k}|v(xa^{ln_k})|w(xa^{ln_k})d\lambda(x)
+\sum_{l=1}^{\infty}\int_{E_k}|v(xa^{-ln_k})|w(xa^{-ln_k})d\lambda(x)\right)=0$$
where $\Omega$ is the set of all Borel functions $v$ on $G$ satisfying $\int_G \Psi(|v|)d\lambda\leq 1$.
\end{enumerate}
\end{theorem}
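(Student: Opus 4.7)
The plan is to establish the cycle (i) $\Rightarrow$ (ii) $\Rightarrow$ (iii) $\Rightarrow$ (i). The step (i) $\Rightarrow$ (ii) is immediate from the definition of chaos, which requires $\mathcal{P}(T_a)$ to be dense.

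For (iii) $\Rightarrow$ (i), I would first observe that the single $l=1$ term of the infinite sum in (iii) is dominated by the whole series, so condition (iii) here implies condition (iii) of Theorem \ref{transitive}, and hence $T_a$ is topologically transitive. To establish density of periodic elements, fix $f \in L^\Phi_w(G)$ and $\varepsilon > 0$; choose $g \in C_c(G)$ with $\|f - g\|_{\Phi,w} < \varepsilon$, set $K := \mathrm{supp}(g)$, and apply (iii) with this $K$ to produce $(E_k)$ and $(n_k)$. I would then form the candidate
$$p_k := \sum_{l \in \Bbb Z} T_a^{l n_k}(g \chi_{E_k}),$$
which is formally $T_a^{n_k}$-invariant, hence a periodic point of $T_a$. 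Convergence in norm and the bound $\|p_k - f\|_{\Phi,w} \to 0$ follow from the decomposition
$$p_k - f = (g \chi_{E_k} - g) + (g - f) + \sum_{l \ne 0} T_a^{l n_k}(g \chi_{E_k}),$$
where the first piece is bounded by $\|g\|_\infty \sup_{v \in \Omega}\int_{K \setminus E_k} |v| w\, d\lambda$, the second by $\varepsilon$, and the tail by $\|g\|_\infty$ times the supremum-sum in (iii), each obtained via the right-Haar substitution $y = x a^{l n_k}$ already used in the proof of Theorem \ref{transitive}.

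The substantive direction is (ii) $\Rightarrow$ (iii), which I would prove in the spirit of (i) $\Rightarrow$ (iii) of Theorem \ref{transitive} but exploiting periodicity to control all shifts simultaneously. Given compact $K$ with $\lambda(K) > 0$ and a sequence $\varepsilon_k \to 0$, (ii) supplies periodic $h_k \in L^\Phi_w(G)$ with $T_a^{N_k} h_k = h_k$ and $\|h_k - \chi_K\|_{\Phi,w} < \varepsilon_k^2$. Since $h_k$ is also $(M N_k)$-periodic for every $M \in \Bbb N$ and $a$ is aperiodic, after replacing $N_k$ by a sufficiently large multiple I may assume $K \cap K a^{\pm j N_k} = \emptyset$ for all $j \geq 1$ and that $(N_k)$ is strictly increasing; set $n_k := N_k$. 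Mirroring Theorem \ref{transitive}, define $A_k := \{x \in K : |h_k(x) - 1| \geq \varepsilon_k\}$ and $E_k := K \setminus A_k$, so $|h_k| > 1 - \varepsilon_k$ on $E_k$ and $\sup_{v \in \Omega} \int_{A_k} |v| w\, d\lambda < \varepsilon_k$, giving the first clause of (iii). For the second clause, periodicity yields $|h_k(x a^{l n_k})| = |h_k(x)| > 1 - \varepsilon_k$ for every $x \in E_k$ and every $l \in \Bbb Z$, while aperiodicity forces $\chi_K(x a^{l n_k}) = 0$ when $l \ne 0$; since the translates $E_k a^{l n_k}$ are pairwise disjoint, restricting the defining integral of $\|h_k - \chi_K\|_{\Phi,w}$ to $\bigsqcup_{l \ne 0} E_k a^{l n_k}$ and substituting $y = x a^{l n_k}$ gives
$$(1 - \varepsilon_k) \sup_{v \in \Omega} \sum_{l \ne 0} \int_{E_k} |v(x a^{l n_k})| w(x a^{l n_k})\, d\lambda(x) \leq \|h_k - \chi_K\|_{\Phi,w} < \varepsilon_k^2,$$
whose left-hand side is exactly the expression in the second clause of (iii) once the sum is split into $l \geq 1$ and $l \leq -1$.

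The main obstacle I expect is this last supremum-sum bound in (ii) $\Rightarrow$ (iii): the factor $\varepsilon_k^2$ must dominate the \emph{entire} series $\sum_{l \ne 0}$ uniformly in $v \in \Omega$, so one cannot argue shift-by-shift as in Theorem \ref{transitive}. The uniformity is what forces the single inequality above, requiring a careful interplay between the periodicity of $h_k$ (so that $|h_k|$ stays large on every translate of $E_k$) and the aperiodicity of $a$ (so that those translates are disjoint from $K$, making $\chi_K$ vanish there). Passing to multiples of $N_k$ to guarantee both the disjointness threshold and the strict monotonicity of $(n_k)$ is a minor technical adjustment, harmless because summing over a sparser set of shifts only improves the inequality.
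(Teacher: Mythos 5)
Your proposal is correct and follows essentially the same route as the paper: (i) $\Rightarrow$ (ii) holds by definition of chaos, (ii) $\Rightarrow$ (iii) is obtained by approximating $\chi_K$ with a periodic point whose period is taken large enough (via aperiodicity) that the translates $Ka^{ln_k}$ are pairwise disjoint, so the entire series is dominated by a single integral over $G\setminus K$, and (iii) $\Rightarrow$ (i) uses the same bi-infinite periodic sum $\sum_{l}T_a^{ln_k}(f\chi_{E_k})$. The only differences are cosmetic: your $\varepsilon_k$ versus the paper's $4^{-k}$ and $2^{-k}$, and your explicit observation that the $l=1$ term of the series recovers condition (iii) of Theorem \ref{transitive}, which the paper merely asserts.
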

\begin{proof}
We will show (ii) $\Rightarrow$ (iii), and (iii) $\Rightarrow$ (i).

(ii) $\Rightarrow$ (iii). Let $K \subseteq G$ be a compact set with $\lambda(K) >0$.
Since $a$ is aperiodic, there exists $M\in\Bbb{N}$ such that $K\cap Ka^{\pm m}=\emptyset$ for all $m> M$. Let $\chi_K\in L_w^\Phi(G)$
be the characteristic function of $K$. By the density of ${\cal P}(T_a)$, we can find a sequence $(f_k)$ of periodic points of $T_{a}$ satisfying
$\|f_k-\chi_{K}\|_{\Phi,w}<\frac{1}{4^k}$, and a sequence $(n_k)\subset \Bbb{N}$ such that $T_{a}^{n_k}f_k=f_k=S_{a}^{n_k}f_k$, where we may assume $n_{k+1}>n_k> M$.
Therefore, $Ka^{rn_k}\cap Ka^{sn_k}=\emptyset$ for all $r,s\in \Bbb{Z}$ with $r\neq s$.

Let $A_k=\{x\in K:|f_k(x)-1|\geq\frac{1}{2^k}\}$, and let $E_k=K\setminus A_k$. Then
$$|f_k(x)|>1-\frac{1}{2^k} \qquad ( x\in E_k),$$
and by the similar argument as in the proof Theorem \ref{transitive}, one has
$$ \sup_{v\in \Omega}\int_{A_k}|v(x)|w(x)d\lambda(x)<\frac{1}{2^k},$$
that is,
$$\sup_{v\in \Omega}\int_{K\setminus{E_k}}|v(x)|w(x)d\lambda(x)<\frac{1}{2^k}.$$
On the other hand, by $Ka^{rn_k}\cap Ka^{sn_k}=\emptyset$ and the right invariance of the Haar measure $\lambda$,
\begin{eqnarray*}
&&\frac{1}{4^k} > \|f_k-\chi_{K}\|_{\Phi,w}\\
&=& \sup_{v\in \Omega}\int_G|f_k(x)-\chi_{K}(x)||v(x)|w(x)d\lambda(x)\\
&\geq& \sup_{v\in \Omega}\int_{G\setminus K}|f_k(x)||v(x)|w(x)d\lambda(x)\\
&\geq& \sup_{v\in \Omega}\left(\sum_{l=1}^{\infty}\int_{Ka^{ln_k}}|f_k(x)||v(x)|w(x)d\lambda(x)
+\sum_{l=1}^{\infty}\int_{Ka^{-ln_k}}|f_k(x)||v(x)|w(x)d\lambda(x)\right)\\
&=& \sup_{v\in \Omega}\left(\sum_{l=1}^{\infty}\int_{Ka^{ln_k}}|(T_a^{ln_k}f_k)(x)||v(x)|w(x)d\lambda(x)
+\sum_{l=1}^{\infty}\int_{Ka^{-ln_k}}|(S_a^{ln_k}f_k)(x)||v(x)|w(x)d\lambda(x)\right)\\
&=& \sup_{v\in \Omega}\left(\sum_{l=1}^{\infty}\int_{Ka^{ln_k}}|f_k(xa^{-ln_k})||v(x)|w(x)d\lambda(x)
+\sum_{l=1}^{\infty}\int_{Ka^{-ln_k}}|f_k(xa^{ln_k})||v(x)|w(x)d\lambda(x)\right)\\
&=& \sup_{v\in \Omega}\left(\sum_{l=1}^{\infty}\int_{K}|f_k(x)||v(xa^{ln_k})|w(xa^{ln_k})d\lambda(x)
+\sum_{l=1}^{\infty}\int_{K}|f_k(x)||v(xa^{-ln_k})|w(xa^{-ln_k})d\lambda(x)\right)\\
&\geq& \sup_{v\in \Omega}\left(\sum_{l=1}^{\infty}\int_{E_k}|f_k(x)||v(xa^{ln_k})|w(xa^{ln_k})d\lambda(x)
+\sum_{l=1}^{\infty}\int_{E_k}|f_k(x)||v(xa^{-ln_k})|w(xa^{-ln_k})d\lambda(x)\right)\\
&>& (1-\frac{1}{2^k})\sup_{v\in \Omega}\left(\sum_{l=1}^{\infty}\int_{E_k}|v(xa^{ln_k})|w(xa^{ln_k})d\lambda(x)
+\sum_{l=1}^{\infty}\int_{E_k}|v(xa^{-ln_k})|w(xa^{-ln_k})d\lambda(x)\right).
\end{eqnarray*}
Hence, the condition (iii) is obtained.

(iii) $\Rightarrow$ (i). By Theorem \ref{transitive}, condition (iii) implies that $T_a$ is topologically transitive.
Here, we will only show that ${\cal P}(T_a)$ is dense in $L_w^\Phi(G)$.
Let $f\in C_c(G)$ with compact support $K\subseteq G$. Then there exist a sequence of Borel sets $(E_k)$ in $K$, and a sequence $(n_k)\subseteq\mathbb N$
such that $Ka^{rn_k}\cap Ka^{sn_k}=\emptyset$,
$$\sup_{v\in \Omega}\int_{K\setminus{E_k}}|v(x)|w(x)d\lambda(x)<\frac{1}{2^k}$$
and
$$\sup_{v\in \Omega}\left(\sum_{l=1}^{\infty}\int_{E_k}|v(xa^{ln_k})|w(xa^{ln_k})d\lambda(x)+
\sum_{l=1}^{\infty}\int_{E_k}|v(xa^{-ln_k})|w(xa^{-ln_k})d\lambda(x)\right)<\frac{1}{2^k}.$$
Let
$$v_k=f\chi_{E_k}+\sum_{l=1}^{\infty}T_a^{ln_k}(f\chi_{E_k})+\sum_{l=1}^{\infty}S_a^{ln_k}(f\chi_{E_k}).$$
Then
\begin{eqnarray*}
T_a^{n_k}v_k&=&T_a^{n_k}(f\chi_{E_k})+\sum_{l=1}^{\infty}T_a^{n_k}T_a^{ln_k}(f\chi_{E_k})
+\sum_{l=1}^{\infty}T_a^{n_k}S_a^{ln_k}(f\chi_{E_k})\\
&=&\sum_{l=1}^{\infty}T_a^{ln_k}(f\chi_{E_k})+f\chi_{E_k}+\sum_{l=1}^{\infty}S_a^{ln_k}(f\chi_{E_k})=v_k
\end{eqnarray*}
which implies that $v_k\in {\cal P}(T_a)$ for each $k$. Moreover,
\begin{eqnarray*}
\|f-f\chi_{E_k}\|_{\Phi,w}
&=& \sup_{v\in \Omega}\int_G|f(x)-f(x)\chi_{E_k}(x)||v(x)|w(x)d\lambda(x)\\
&=& \sup_{v\in \Omega}\int_{K\setminus E_k}|f(x)||v(x)|w(x)d\lambda(x)< \frac{\|f\|_{\infty}}{2^k}.
\end{eqnarray*}
Also, by $Ka^{rn_k}\cap Ka^{sn_k}=\emptyset$ and the right invariance of $\lambda$, we have
\begin{eqnarray*}
&&\left\|\sum_{l=1}^{\infty}T_a^{ln_k}(f\chi_{E_k})+\sum_{l=1}^{\infty}S_a^{ln_k}(f\chi_{E_k})\right\|_{\Phi,w}\\
&=& \sup_{v\in \Omega}\int_G\left|\sum_{l=1}^{\infty}T_a^{ln_k}(f\chi_{E_k})(x)+\sum_{l=1}^{\infty}S_a^{ln_k}(f\chi_{E_k})\right||v(x)|w(x)d\lambda(x)\\
&=& \sup_{v\in \Omega}\int_G\left|\sum_{l=1}^{\infty}f(xa^{-ln_k})\chi_{E_k}(xa^{-ln_k})
+\sum_{l=1}^{\infty}f(xa^{ln_k})\chi_{E_k}(xa^{ln_k})\right||v(x)|w(x)d\lambda(x)\\
&=& \sup_{v\in \Omega}\Big(\sum_{l=1}^{\infty}\int_G|f(xa^{-ln_k})\chi_{E_k}(xa^{-ln_k})||v(x)|w(x)d\lambda(x)\\
&&\qquad+\sum_{l=1}^{\infty}\int_G|f(xa^{ln_k})\chi_{E_k}(xa^{ln_k})||v(x)|w(x)d\lambda(x)\Big)\\
&=& \sup_{v\in \Omega}\Big(\sum_{l=1}^{\infty}\int_G|f(x)\chi_{E_k}(x)||v(xa^{ln_k})|w(xa^{ln_k})d\lambda(x)\\
&&\qquad+\sum_{l=1}^{\infty}\int_G|f(x)\chi_{E_k}(x)||v(xa^{-ln_k})|w(xa^{-ln_k})d\lambda(x)\Big)\\
&\leq& \|f\|_{\infty} \sup_{v\in \Omega}\Big(\sum_{l=1}^{\infty}\int_{E_k}|v(xa^{ln_k})|w(xa^{ln_k})d\lambda(x)
+\sum_{l=1}^{\infty}\int_{E_k}|v(xa^{-ln_k})|w(xa^{-ln_k})d\lambda(x)\Big)\\
&<&\frac{\|f\|_{\infty}}{2^k}.
\end{eqnarray*}
Therefore,
$$\|v_k-f\|_{\Phi,w}\leq\|f\chi_{E_k}-f\|_{\Phi,w}
+\left\|\sum_{l=1}^{\infty}T_a^{ln_k}(f\chi_{E_k})+\sum_{l=1}^{\infty}S_a^{ln_k}(f\chi_{E_k})\right\|_{\Phi,w}\rightarrow 0$$
as $k\rightarrow\infty$.
Hence, the set ${\cal P}(T_a)$ is dense in $L_w^\Phi(G)$.
\end{proof}

\begin{example}
Let $$G=\Bbb{H}:=\left\{\left(\begin{matrix}
1 & x & z\\
0 & 1 & y\\
0 & 0 & 1
\end{matrix}\right):x,y,z\in \Bbb{R}\right\}$$
be the Heisenberg group. For
convenience, an element in $\Bbb{H}$ is written as $(x,y,z)$.

Let $a=(1,0,3)$ and $w$ be a weight on $\Bbb{H}$.
Then $a^{-1}=(-1,0,-3)$ and the translation $T_{(1,0,3)}$
on $L^\Phi_w(\Bbb{H})$ is given by
$$T_{(1,0,3)}f(x,y,z)=f(x-1,y,z-3)\qquad(f\in L^\Phi_w(\Bbb{H})).$$
By Theorem \ref{chaos}, the operator $T_{(1,0,3)}$
is chaotic if given a compact subset $K \subseteq \Bbb{H}$, there exist a
sequence of Borel sets $(E_{k})$ in $K$ and a strictly increasing sequence $(n_k)\subset\Bbb{N}$ such that
$$\lim_{k \rightarrow \infty}\sup_{v\in \Omega}\int_{K\setminus{E_k}}|v(x,y,z)|w(x,y,z)dxdydz=0$$
and
\begin{eqnarray*}
&&
\lim_{k \rightarrow \infty}\sup_{v\in \Omega}\Big(\sum_{l=1}^{\infty}\int_{E_k}|v(x+ln_k,y,z+3ln_k)|w(x+ln_k,y,z+3ln_k)dxdydz\\
&&+\sum_{l=1}^{\infty}\int_{E_k}|v(x-ln_k,y,z-3ln_k)|w(x-ln_k,y,z-3ln_k)dxdydz\Big)=0
\end{eqnarray*}
where $\Omega$ is the set of all Borel functions $v$ on $\Bbb{H}$ satisfying
$$\int_{\Bbb{H}} \Psi(|v(x,y,z)|)dxdydz\leq 1.$$
\end{example}

\vspace{.1in}
\end{document}